\theoremstyle{definition}
\newtheorem{theo}{Theorem}[section]
\newtheorem{pr}[theo]{Proposition}
 \newtheorem{coro}[theo]{Corollary}
		 \newtheorem{ass}[theo]{Assumption}
\theoremstyle{remark}
\newtheorem{rema}[theo]{Remark}
\theoremstyle{definition}
\newtheorem{defi}[theo]{Definition}
\newcommand \cu{\underline{C}}
\newcommand\ilim\varinjlim
\newcommand\chow{\operatorname{Chow}}
\newcommand\dmgm{\operatorname{DM}_{gm}}
\newcommand\dmgmop{\operatorname{DM}_{gm}^{op}}
\newcommand\smc{\operatorname{SmCor}}
\newcommand\mg{M_{gm}}
\newcommand\mgc{M_{gm}^c}
\newcommand\dms{\operatorname{DM}_{sv}}
\newcommand\dmsp{\operatorname{DM}'_{sv}}
\newcommand\dmsc{\operatorname{DM}^c_{sv}}
\newcommand\motn{\operatorname{Mot_{num}}} 
\newcommand\tnum{{t_{num}}} 
\newcommand\hetl{H^{et}_{\ql}{}}
\newcommand\rhetl{{RH}^{et}_{\ql}{}}
\newcommand\kalg{k^{alg}}
\newcommand\spe{\operatorname{Spec}}
\newcommand\var{\operatorname{Var}}
\newcommand\sv{\operatorname{SmVar}}
\newcommand\spv{\operatorname{SmPrVar}}
\newcommand\sm{\underline{\operatorname{SmVar}}}
\newcommand\af{\mathbb{A}}
\newcommand\pt{\operatorname{pt}}
\newcommand\lan{\langle}
\newcommand\ra{\rangle}
\newcommand\ns{\{0\}}
\newcommand\q{{\mathbb{Q}}}
\newcommand\obj{\operatorname{Obj}}
\DeclareMathOperator\cha{\operatorname{char}}
\newcommand\vecto{\operatorname{vect}}
\newcommand\ql{{\mathbb{Q}_l}}
\newcommand\z{{\mathbb{Z}}}
\newcommand\com{\mathbb{C}}
\newcommand\p{\mathbb{P}}
\DeclareMathOperator\co{\operatorname{Cone}}
\begin{document}

 \title{Conservativity of realizations implies that numerical motives are Kimura-finite and motivic zeta functions are rational}
 \author{M.V. Bondarko
   \thanks{The main results of the paper were  obtained under support of the Russian Science Foundation grant no. 16-11-10200.}
	}
 \maketitle
\begin{abstract}
 
In 
 this note we prove the following: if the (\'etale or de Rham) realization functor is conservative on the category $\dmgm$ of $\q$-linear Voevodsky motives then  motivic zeta functions of arbitrary varieties are rational 
 and numerical motives are Kimura-finite. 
The latter statement immediately implies that the category $\motn$ of numerical motives is (essentially) 
 Tannakian.

This observation becomes actual due to the recent announcement of J. Ayoub that the 
de Rham cohomology realization is conservative on $\dmgm(k)$ whenever $\cha k=0$. We apply this statement to  exterior powers of motives coming from generic hyperplane sections of smooth affine varieties. 

MSC 2010: 14C15, 14F20, 18E05, 19F27.

Keywords: Voevodsky motives, de Rham and \'etale cohomology, 
Kimura-finite objects, Chow motives, motivic zeta functions, numerical motives, weight complexes.

\end{abstract}

\tableofcontents

 \section*{Introduction}

In this note 
we (essentially) prove that the conservativity of a Weil realization for the category $\dmgm$ of Voevodsky geometric motives with rational coefficients over an infinite perfect field $k$ implies certain nice "finite-dimensionality" properties of motives. The actuality of these statements comes from the recent text \cite{ayoubcon} where the conservativity of the restriction of the De Rham realization $RH_{dr}$ 
 to Chow motives is established under the assumption that $p=\cha k=0$. Moreover, in \cite{bwcomp} certain statements that allow to deduce
the conservativity of $RH_{dr}$  on the category $\dmgm\supset\chow$ from Theorem II of \cite{ayoubcon} were proved; so, J. Ayoub plans to prove the latter conservativity statement eventually. Even though the current version of the proof of loc. cit. contains a gap, it appears to be interesting to know the consequences of this conservativity of realizations assertion; note that it is equivalent to the conservativity of the \'etale realization  (combined with the base change functor from $k$ to $\kalg$; see  Remark \ref{rkim}(2) below) that 
will be denoted by $\rhetl$ 
 (here $l$ is an arbitrary prime; this equivalence is given by the easy Proposition \ref{pcons} below). 
 For this reason the author chose to formulate our main 
 assumption as follows.  


\begin{ass}\label{assmain}
There exists $l\neq p=\cha k$ such that the functor $\rhetl$ is conservative, i.e., an object $M$ of $\dmgm$ is zero whenever $\rhetl(M)=0$.
\end{ass}


Recall that this statement is conjecturally valid for 
  arbitrary base fields (and all $l\in \p\setminus\{p\}$). 

Now, the starting point of this paper is that for any smooth affine $k$-variety $A$ and any its 
 generic hyperplane section $Z$ (corresponding to any  embedding of $A$ into a projective space) 
 the relative \'etale cohomology for $(Z,A)$ is concentrated in the degree $\dim A$ only (see Corollary 3.4.1 of \cite{katzaff}; this statement can also be deduced from a statement in \cite[\S3.3.1]{be87}). 
Combining  this statement with Assumption \ref{assmain} we obtain that the 
 motif $\co(\mg(Z\to A))$ is Kimura finite dimensional (either evenly or oddly, depending on the parity of $\dim A$). 

The latter statement easily implies two nice consequences (even though these Kimura-finite motives are not pure, i.e., are not shifts of Chow motives; cf. Remark \ref{rnori}(3)). We prove that the Kapranov's motivic zeta function of any $k$-variety $X$ is rational; here we take motivic zeta functions of varieties and motives that 
belong to 
 $K^0(\chow)[[t]]$ (after \S2 of \cite{gul}; the idea to relate Kimura-finiteness to motivic zetas originates from \cite{andsurvey}).  This statement it very easy to understand when $X$ is smooth projective (and see \S\ref{sgs} for the general case); however, its proof 
 heavily relies on Voevodsky motives and the isomorphism $K_0(\dmgm)\cong K_0(\chow)$ (as established in \cite{mymot} and \cite{bzp}; cf. Remark \ref{rnori}(4)  below).

We also prove that our conservativity Assumption \ref{assmain} implies that all numerical motives are Kimura-finite; the proof uses the exact "motivic" weight complex functor (as introduced in \cite{mymot}). Thus the category $\motn$ is Tannakian. 

Note also that the aforementioned generic hyperplane section argument is not the only one that gives Kimura-finite objects of $\dmgm$. An alternative method is the vanishing cycle one as  described in \cite{ayoubvan}. Its advantage is that it allows producing finite dimensional motives over  characteristic $p>0$ fields from that over characteristic $0$ ones.
  We will probably apply an argument of this sort in the next version of this preprint to reduce the main statements of this paper for all values of $p$ to Assumption \ref{assmain} for the case $p=0$; see Remarks \ref{rnori}(2) and \ref{rinfield}(1) below for some more detail. However, the author doubts that this alternative method is "strictly better" than the one that we use in the current text.

Now let us describe the contents of the paper. 

In \S\ref{smot} we briefly recall some basics on motives, their \'etale cohomology, and Kimura-finiteness. 

In \S\ref{sfd} we prove that Assumption \ref{assmain} 
implies 
the finite dimensionality  of motives 
of the form $\co(\mg(Z\to A))$ as above 
 (in  $\dmgm$).

In \S\ref{szeta} we study motivic zeta functions and prove that 
 Assumption \ref{assmain} implies that they are rational for motives belonging to a "large" dense subcategory $\dms$ of $\dmgm$ (that contains all motives of smooth varieties).

In \S\ref{skim} we prove (under Assumption \ref{assmain}) that all numerical motives are finite-dimensional; hence the category of numerical motives is essentially Tannakian.

In \S\ref{suppl} we prove that 
	 the rationality of zeta functions statement can be extended from $\dms$ to an (a priori) larger subcategory of $\dmgm$ that contains motives with compact support of arbitrary varieties; thus we obtain the rationality of 
	 motivic zeta functions of varieties. We also recall the 
 well-known relation between the conservativity of different realizations in the case $p=0$; so we relate our Assumption \ref{assmain} to the aforementioned claim of Ayoub. 
	
	Most of definitions mentioned in this paper are well-known; since we do not need much detail on them, we prefer not to include some of them in the text (and give references instead; see the survey paper \cite{andsurvey}). We also make several remarks 
	  concerning literature on motives, their Kimura-finiteness, and related matters. 

The author is deeply grateful to  prof. J. Ayoub for interesting discussions concerning the conservativity of realizations and its applications. The comments of prof. B. Kahn were  very helpful as well.
Moreover, the author is extremely thankful to the officers of the Max Planck Institut f\"ur Mathematik for the wonderful working conditions during the writing of this text.


\section{Some 
 preliminaries}\label{smot}

First we introduce some notation.

In this paper  $k$ will denote a perfect ground field; $p=\cha k$ (it can be zero). We will also assume that $k$ is infinite (see Remark \ref{rinfield}(2) below).

It will be convenient for us to use the term "$k$-variety" for reduced  separated  (not necessarily integral) schemes of finite type over $\spe k$; we will write $\var$ for the set of all $k$-varieties. 
 Accordingly,  the set of smooth varieties (resp. of smooth projective varieties) over $k$ will be denoted by $\sv$ (resp. by $\spv$), and we do not assume these schemes to be connected. We will write $\sm$ for the corresponding category of smooth varieties.


 $\pt$ is the point $\spe k$, $\af^n$ is the $n$-dimensional
affine space (over $k$); $\p^1$ is the projective line. 

In this paper all motives will be $\q$-linear ones (so, we will omit $\q$ in the notation); by default, they will be $k$-ones.

Respectively, we will write $\dmgm$ for the triangulated category of geometric Voevodsky motives over $k$ with rational coefficients; see 
 \cite[\S5.3]{kellyast},   \cite[Appendix A.2]{kellyth},  \cite[\S2.1,\ 4.3]{1}, and (Proposition 1.3.3 of) \cite{bokum}.

The following properties of Voevodsky motives are well-known;  for this reason we will not give precise references to them.

So, $\dmgm$ is a 
  small tensor triangulated category equipped with a functor $\mg:\sm\to \dmgm$. Moreover, $\q=\mg(\pt)$ is the unit object 
	 of $\dmgm$, and  the morphisms $\pt\to \p^1\to \pt$ give a decomposition $ \mg(\p^1)\cong \q\bigoplus \q\lan 1 \ra$; here $\q\lan 1 \ra$ is a certain Lefschetz motif that is $\otimes$-invertible in $\dmgm$. 
	 We will write  $-\lan j \ra$ for the $j$th iteration of the endofunctor $-\otimes\q\lan 1\ra:\dmgm\to \dmgm$  for any $j\in \z$ (in \cite{1} this functor was denoted by $-(j)[2j]$); recall that $\dmgm$ coincides with its smallest dense subcategory that contains $\cup_{j\in \z}\mg(\sv)\lan j \ra$. Lastly, $\dmgm$ is {\it Karoubian}, i.e., any idempotent endomorphism $p:M\to M$ in $\dmgm$ gives a direct sum decomposition of $M$ in it; thus $p$ has a (categorical) image.

Since the "main" motivic categories $\dmgm$, $\chow$, and $\motn$ of this paper are idempotent complete by definition, the word "retract" below is a synonym of "direct summand".

For a field $K$ we will write $K-\vecto$ for the category of finite-dimensional vector spaces.

We will say that a set $B$ of objects of $\dmgm$ {\it strongly generates} a subcategory $\cu\subset \dmgm$ if $\cu$ is the  smallest strictly full triangulated subcategory of $\dmgm$ containing $B$.




Next we define Kimura-finite motives. 

\begin{defi}\label{dkim}

Let $M$ be an object of $\dmgm$.

1. For $m>0$ we will write $\wedge^{m}M$ (resp. $\operatorname{Sym}^m(M)$) for the categorical image of the idempotent endomorphism $\sum_{\sigma\in S_m} (-1)^{i(s\sigma)}\sigma_{*M^{\otimes m}}: M^{\otimes m}\to M^{\otimes m}$ (resp., of $\sum_{\sigma\in S_m} \sigma_{*M^{\otimes m}}/m!$; we permute the components of the tensor power here).

2.  $M$ will be said to be {\it Kimura-even} if for some $r>0$ we have $\wedge^{r}M=0$, i.e., if $\sum_{\sigma\in S_r} (-1)^{i(s\sigma)}\sigma_{*M^{\otimes r}}=0$.


3. $M$ will be said to be {\it Kimura-odd} if for some $r>0$ we have  $\operatorname{Sym}^r(M)=0$, i.e.,  if $\sum_{\sigma\in S_r} \sigma_{*M^{\otimes r}}=0$.

4. We will say that $M$ is {\it Kimura finite dimensional} (or just Kimura-finite or finite dimensional)  if it can be presented as a $\dmgm$-direct sum $M_+\bigoplus M_-$, where $M_+$ is Kimura-even and $M_-$ is Kimura-odd.

\end{defi}

\begin{rema}\label{rkim}
1. Kimura-finite objects have several nice properties. However, instead of recalling them just 
now we will only relate the finite-dimensionality of motives to their cohomology.

2. So we recall that for any prime $l$ distinct from $p$ there exists a   tensor exact $\ql$-\'etale realization functor $\rhetl:
\dmgmop\to D^b(\ql-\vecto)$ 
whose composition with the functor $\mg$ gives the functor of total $\ql$-\'etale cohomology  
($X\mapsto \rhetl(X_{k^{alg}})$, i.e., this composition sends a variety $X$ into the cohomology of its base change to the algebraic closure of $k$); see Theorem 4.3 of \cite{ivorretale}.\footnote{The reader may certainly assume that $k$ is algebraically closed itself; cf. Remark \ref{rinfield}(2) below.}
\end{rema}


\section{On Kimura-finiteness of certain mixed motives}\label{sfd}

The key statement of this paper is as follows.

\begin{theo}\label{tfdim}
Assume that Assumption \ref{assmain} is fulfilled for $\rhetl$ (and our $k$). Then the following statements are valid.

1. Let $M\in \obj \dmgm$. If the cohomology 
$\ql$-vector spaces $\hetl^s(M)$ of $\rhetl(M)$ are zero in even (resp. odd) degrees then $M$ is Kimura-odd (resp., Kimura-even).  

2. Let $A$ be a smooth connected affine $k$-variety; choose an embedding of $A$ into an affine space. Then 
  a generic hyperplane section $Z$ of $A$ is smooth, 
 and the motives $\co(\mg(Z\to A))\lan i \ra$\footnote{Recall that the aforementioned functor $\mg:\sm\to \dmgm$ factors through an exact functor $K^b(\smc)\to \dmgm$; see \cite{1}. Thus the motif $\co(\mg(Z\to A))$ can also be described as the image of the corresponding two-term complex $\dots\to 0\to Z\to A\to 0\dots \in \obj K^b(\smc)$; here we put $A$ in degree $0$.} are Kimura-even for all $i\in \z$ whenever $\dim A$ is even and are Kimura-odd in the opposite 
case.

\end{theo}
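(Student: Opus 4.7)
My approach is to deduce Part 2 from Part 1 together with the cited Katz concentration result, so the core work lies in Part 1.

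For Part 1 the strategy is to pass to the realization, compute $\wedge^r$ and $\operatorname{Sym}^r$ explicitly in $D^b(\ql-\vecto)$, and apply Assumption \ref{assmain} at the end. Since $\rhetl$ is a symmetric monoidal exact functor to a Karoubian target, and the paper's $\wedge^r M$ and $\operatorname{Sym}^r M$ are images of idempotents built from the tensor symmetry of $\dmgm$, one has $\rhetl(\wedge^r M) \cong \wedge^r(\rhetl M)$ and $\rhetl(\operatorname{Sym}^r M) \cong \operatorname{Sym}^r(\rhetl M)$, where on the right the operators are formed using the Koszul sign symmetry of $D^b(\ql-\vecto)$. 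Any object of $D^b(\ql-\vecto)$ is a direct sum of shifts, so if $\rhetl(M)$ is concentrated in even degrees, it is ``super-even'' and the Koszul signs in $\rhetl(M)^{\otimes r}$ are all trivial; the paper's $\wedge^r$ idempotent then coincides with the classical antisymmetrizer on the finite-dimensional underlying graded vector space $V = \bigoplus_s \hetl^s(M)$, and its image vanishes for $r > \dim_{\ql} V$. Dually, if $\rhetl(M)$ is concentrated in odd degrees, each Koszul permutation on $\rhetl(M)^{\otimes r}$ contributes a factor $\operatorname{sgn}(\sigma)$, so the paper's $\operatorname{Sym}^r$ idempotent reduces to the classical antisymmetrizer on $V$ and again vanishes for $r$ large. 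In each parity case Assumption \ref{assmain} then forces $\wedge^r M = 0$ (resp.\ $\operatorname{Sym}^r M = 0$) in $\dmgm$, giving Kimura-evenness (resp.\ Kimura-oddness) of $M$.

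For Part 2, the smoothness of a generic hyperplane section $Z$ of $A$ follows from Bertini (using that $k$ is infinite). The long exact étale cohomology sequence of the pair $(Z,A)$ combined with Corollary 3.4.1 of \cite{katzaff} gives that $\hetl^s(\co(\mg(Z\to A)))$ vanishes for $s\neq\dim A$, so Part 1 yields the claimed parity of Kimura-finiteness. Tensoring with the invertible object $\q\lan 1\ra$ shifts the cohomological degree of the realization by the even number $2$, and Kimura-finiteness of a given parity is preserved under tensoring with an invertible object whose realization is purely super-even; so the twists $\co(\mg(Z\to A))\lan i\ra$ inherit the same parity for all $i\in\z$. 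The main obstacle I expect is the careful bookkeeping of Koszul signs in Part 1: one must verify cleanly that the paper's sign-free idempotents in $\dmgm$ correspond, under the symmetric monoidal $\rhetl$, to the ordinary antisymmetrizer/symmetrizer on the underlying ungraded vector space in each parity case. Everything else — Assumption \ref{assmain}, the Katz concentration theorem, Bertini, and the evenness of the cohomological shift induced by $\q\lan 1\ra$ — enters as black-box input.
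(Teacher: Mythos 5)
Your proposal follows essentially the same route as the paper's proof. For Part 1 you apply tensor functoriality of $\rhetl$ and the vanishing of $\wedge^r$ (resp.\ $\operatorname{Sym}^r$) on objects of $D^b(\ql-\vecto)$ concentrated in even (resp.\ odd) degrees — the Koszul-sign bookkeeping you spell out is exactly what the paper delegates to Proposition 3.9 of Kimura — and for Part 2 you combine Bertini, Artin vanishing, and Katz's Corollary 3.4.1 with the observation that $\lan 1 \ra$ shifts cohomological degree by $2$, just as the paper does.
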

\begin{proof}
1. The standard definition of the tensor product on $D^b(\ql-\vecto)$ implies that for $r=1+\sum_{s\in \z}\dim_{\ql}(\hetl^s(M))$ we have
$\operatorname{Sym}^r(\rhetl(M))=0$ (resp. $\wedge^{r}{\rhetl(M)}=0$); cf. Proposition 3.9 of \cite{kim}. Since $\rhetl$ is a tensor functor, it remains to apply Assumption \ref{assmain} to obtain the result.

2. Certainly, a generic choice of $Z$ is smooth. 

According to assertion 1, it remains to  verify that $H^s(\rhetl(\co(\mg(Z\to A)))=\ns$ unless $s=\dim A$ (for a "generic" $Z$; note that the twist $-\lan i \ra$ essentially shifts cohomology by $2i$). Since both $Z$ and $A$ are affine, 
applying the Artin Vanishing of \'etale cohomology we obtain that the latter statement is  equivalent to the bijectivity of the corresponding morphisms $h^s:\hetl^s(A)\to \hetl^s(Z)$ for $s<\dim A-1$ along with the injectivity of $h^{\dim A-1}$.
The latter statement follows immediately from Corollary 3.4.1(2) of \cite{katzaff} (just take the constant sheaf $\overline{\q}_l$ 
 for $\mathfrak{F}$ and $f=0$ in it).




\end{proof}

\begin{rema}\label{rnori}
1. The "Affine Weak Lefschetz" results of \cite[\S3.4]{katzaff} appear to be closely related to the Basic Nori Lemma (see \S2.5 of \cite{hubnori}) and so also to \S3.3.1 of \cite{be87}.

2. An alternative source of finite-dimensional motives is given by motivic vanishing cycles (cf. \cite{ayoubvan}). After some work, it can 
probably be used to obtain "enough finite-dimensional motives" in the case $p>0$ from Assumption \ref{assmain} for characteristic $0$ fields.\footnote{The author is deeply grateful to prof. J. Ayoub for approving this idea and for sending him a sketch of the corresponding argument. 
 This reasoning will probably be added to a succeeding version of this paper.} So the author hopes to 
 deduce the $p>0$-case of (the conclusion of) Theorem \ref{tnum} below together with a certain version of Theorem \ref{tzeta}  from Assumption \ref{assmain} for the case $p=0$ (and so, from the 
 recent results of Ayoub; see Remark \ref{rinfield}(1)  below).


3. The widely believed to be true Conjecture 7.1 of \cite{kim} predicts that (Chow) motives of smooth projective varieties are finite dimensional. Applying Lemma 3.7(3) of \cite{andsurvey} we deduce that (this conjecture implies that) all Chow motives are finite dimensional.

Hence this conjecture implies Theorem \ref{tnum} below and also an improved version of Theorem \ref{tzeta} (since Chow motives strongly generate $\dmgm$; see Lemma 2.3.1(1) of \cite{bzp} and Proposition 5.2.2 
of \cite{bws}). It also has several other nice consequences (see \S7 of  \cite{kim}). However, the author does not know how to deduce
this conjecture 
from Assumption \ref{assmain}. 

4. Recall also that Theorem 3 of \cite{murre} gives a Chow-K\"unneth decomposition of the motif of any smooth projective (connected) surface. Thus our Assumption \ref{assmain} implies that motives of smooth projective surfaces are Kimura-finite (and in the case $p=0$ this statement also follows from Theorem I of \cite{ayoubcon}; cf. Remark \ref{rinfield}(1) below). Hence all the objects of the tensor additive idempotent complete subcategory of $\chow$ generated by motives of surfaces and Tate twists are finite dimensional as well (see Lemma 3.7(3) of \cite{andsurvey}). 
 Certainly, this conclusion is much better than its "one-dimensional" analogue (cf. Corollary 4.4 of \cite{kim} and Corollary 4 of \cite{gul}). Note however that this observation is certainly not sufficient to deduce the main results of the current paper. 

5. It is well-known that all objects of $D^b(\ql-\vecto)$ are {\it Schur-finite} in this category in the sense of Definition 2.1 of \cite{mazza} (see Example 2.2 or Corollary 2.27 of ibid.). Thus Assumption \ref{assmain} immediately implies that all objects of $\dmgm$ are 
Schur-finite as well (cf. the proof of Theorem \ref{tfdim}(1)). 
However, the (general case of the) notion of Schur-finiteness does not appear to be really useful for applications. 

\end{rema}

\section{On the rationality of 
 motivic zeta functions}\label{szeta}

Now we pass to motivic zeta functions; these are defined as series with coefficients in certain 
 $K_0$-rings of motives.
So we recall the definitions of Grothendieck rings of  Voevodsky and Chow motives (cf. Remark \ref{rkz}) 
 and of the corresponding zeta functions. 

\begin{defi}\label{dkz}
1. $K_0(\dmgm)$ is the Abelian group 
 whose generators are $\{[M]$, $M\in  \obj \dmgm\}$, and the relations are of the form $[B]=[A]+[C]$ if
$A\to B\to C\to A[1]$ is a distinguished triangle. 

2. 
We define $K_0(\chow)$  as the Abelian group 
 whose generators are $\{[M]$, $M\in  \obj\chow\}$,
and the relations are $[B]=[A]+[C]$ if
$A,B,C\in \obj\chow$ and $B\cong A\bigoplus C$.

3. For $M\in\obj \dmgm$ we define
$\zeta(M)=1+ \sum_{i> 0}[\operatorname{Sym}^i(M)]T^i\in
K_{0}(\dmgm)[[T]]$.
\end{defi}

\begin{rema}\label{rkz}
1. Let us recall that there exists a full embedding of $\chow$ into $\dmgm$ that respects tensor products. Its essential image is the class of retracts of $\cup_{j\in \z}\mg(\spv)\lan j\ra$. Thus for any smooth projective $k$-variety $P$ and $r>0$ one may assume that the objects $\wedge^{r}{\mg(P)}$ and $\operatorname{Sym}^r(\mg(P))$ belong to $\obj \chow$.


2. 
Next, the embedding $\chow\to \dmgm$  induces an isomorphism $K_0(\chow)\cong K_0(\dmgm)$; see Proposition 2.3.3 of \cite{bzp} (and also Corollary 6.4.3 of \cite{mymot} for the case $p=0$).

So we will put  the motivic zeta functions of Definition \ref{dkz}(3) into $K_{0}(\chow)[[T]]$.
We will say more on this matter 
 in Remarks  \ref{rzeta}(1) and \ref{rzetavar}(1) below.
\end{rema}

\begin{theo}\label{tzeta}

1. If $A\to B\to C\to A[1]$ is a distinguished triangle in $\dmgm$ then $\zeta(B)=\zeta(A)\cdot \zeta(C)$.

2. Denote by $\dms$ the 
 triangulated subcategory of $\dmgm$ 
 strongly generated (see \S\ref{smot}) by $\cup_{j\in \z}\mg(\sv)\lan j\ra$. 
 Then for any object $M$ of $ \dms$ the zeta function  $\zeta(M)$ is a rational series, i.e., it is of the form $f(T)/g(T)$ for some $f,g\in 1+ TK_{0}(\chow)[T]$.

\end{theo}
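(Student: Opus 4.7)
The plan is first to prove Part 1 by a symmetric-power filtration argument in the triangulated setting, and then to deduce Part 2 by an induction on dimension that combines Part 1 with the generic hyperplane section input of Theorem \ref{tfdim}(2).

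For \emph{Part 1}, given a distinguished triangle $A\to B\to C\to A[1]$, the target identity is $[\operatorname{Sym}^n B]=\sum_{i+j=n}[\operatorname{Sym}^i A]\cdot[\operatorname{Sym}^j C]$ in $K_0(\dmgm)\cong K_0(\chow)$, which is equivalent to $\zeta(B)=\zeta(A)\zeta(C)$. Following the strategy of Guletskii and Heinloth for symmetric powers of extensions in a $\q$-linear Karoubian tensor triangulated category, one constructs inductively a sequence of distinguished triangles $F_{i+1}\to F_i\to \operatorname{Sym}^i A\otimes \operatorname{Sym}^{n-i}C\to F_{i+1}[1]$ with $F_0=\operatorname{Sym}^n B$ and $F_{n+1}=0$, obtained from the $S_n$-equivariant object $B^{\otimes n}$ by filtering tensor factors according to how many of them ``lie over $C$'' and then symmetrising (using Karoubianness of $\dmgm$ to split off the $S_n$-invariants). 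Summing the resulting $K_0$-relations gives the formula.

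For \emph{Part 2}, let $\cu\subseteq\dmgm$ be the class of $M$ with $\zeta(M)$ rational in $K_0(\chow)[[T]]$. Part 1 implies that $\cu$ is stable under distinguished triangles (two out of three in $\cu$ forces the third, since the rational series form a multiplicative subgroup of the fraction field of $K_0(\chow)[[T]]$); the triangle $M\to 0\to M[1]\to M[1]$ yields $\zeta(M[1])=\zeta(M)^{-1}$, so $\cu$ is closed under shifts and hence is a strictly full triangulated subcategory of $\dmgm$. Since $\q\lan 1\ra$ is an $\otimes$-invertible line (Kimura-even with $\wedge^2=0$), we have $\operatorname{Sym}^n(M\lan 1\ra)=\operatorname{Sym}^n(M)\lan n\ra$ and hence $\zeta(M\lan j\ra)(T)=\zeta(M)(\mathbb{L}^j T)$ with $\mathbb{L}=[\q\lan 1\ra]\in K_0(\chow)^{\times}$; this substitution preserves rationality, so $\cu$ is closed under $\lan j\ra$ for every $j\in\z$. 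It therefore suffices to show $\mg(X)\in\cu$ for all $X\in\sv$, which we do by induction on $\dim X$. For $\dim X=0$, $\mg(X)$ is Artin, hence Kimura-even (any finite-dimensional Galois representation $V$ satisfies $\wedge^{\dim V+1}V=0$), and any Kimura-finite motive has rational $\zeta$: Kimura-odd ones give a polynomial, and Kimura-even ones with $\wedge^{r+1}M=0$ give $\zeta(M)=1/\sum_{i=0}^{r}(-1)^{i}[\wedge^i M]T^i$ via the Koszul identity $\sum_{i=0}^{n}(-1)^i[\wedge^i M][\operatorname{Sym}^{n-i}M]=0$ for $n\geq 1$. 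For $\dim X>0$ we first reduce, via iterated Mayer--Vietoris triangles over a finite open cover of $X$ by smooth affines (intersections of affines in the separated $X$ remain affine, hence smooth affine), to the case of $X$ smooth affine. Embedding $X$ into an affine space and picking a generic hyperplane section $Z\subset X$, Theorem \ref{tfdim}(2) gives that $Z$ is smooth and $\co(\mg(Z\to X))$ is Kimura-finite, so lies in $\cu$. Since $\dim Z=\dim X-1$, the inductive hypothesis gives $\mg(Z)\in\cu$, and Part 1 applied to the distinguished triangle $\mg(Z)\to \mg(X)\to \co(\mg(Z\to X))\to \mg(Z)[1]$ yields $\mg(X)\in\cu$, completing the induction.

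The main obstacle is \emph{Part 1}. In an abelian category the multiplicativity of $\zeta$ on short exact sequences follows from an honest filtration on $\operatorname{Sym}^n B$, but a distinguished triangle in $\dmgm$ is only a ``shadow'' of an extension, so the filtration must be constructed carefully using the $S_n$-equivariant structure on $B^{\otimes n}$, the Karoubianness of $\dmgm$, and the presence of $\q$-coefficients. Once Part 1 is secured, Part 2 is a largely formal induction driven by Theorem \ref{tfdim}(2), the Koszul identity for Kimura-finite objects, and Mayer--Vietoris.
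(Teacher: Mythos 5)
Your proposal is correct and follows essentially the same route as the paper: multiplicativity of $\zeta$ on distinguished triangles (Part 1, which the paper simply attributes to Corollary 3 of \cite{gul}, while you sketch the Guletskii-style symmetric-power filtration), and for Part 2 a reduction to a generating set of Kimura-finite cone motives $\co(\mg(Z\to A))\lan i\ra$ supplied by Theorem \ref{tfdim}(2), carried out by induction on dimension via Mayer--Vietoris and generic hyperplane sections. The only differences are in explicitness: you derive the rationality of $\zeta$ for Kimura-finite objects directly from the Koszul identity where the paper cites Proposition 4.6 of \cite{andsurvey}, you spell out closure under $\lan j\ra$ and the $\dim=0$ base case (which the paper handles by including all twists $\lan i\ra$ in the generating set $S$ and by letting the induction absorb the empty hyperplane section of a point), and the paper also reduces to \emph{connected} affine $U$, a small step you leave implicit.
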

\begin{proof}
1. This is Corollary 3 of \cite{gul}.

2. According to 
 assertion 1, it suffices to find a set of objects $S\subset \obj \dms$ that strongly generates $\dms$ (as a subcategory of $\dmgm$)
 and such that $\zeta(M)$ is rational (in the sense described above) for any $M\in S$.

 Now we take $S$ consisting of finite dimensional motives of the form $\co(\mg(Z\to A))\lan i \ra$ as given by Theorem \ref{tfdim}(2). If a motif $M$ is 
 finite-dimensional then its zeta function series is rational (see Proposition 4.6 of \cite{andsurvey}).\footnote{Actually, loc. cit. is formulated for "pure" motives; yet the statement relies on the well-known Formulaire 3.2(5) of ibid., and it yields that $\zeta(M)$ is rational even if we consider the finer Grothendieck ring of $\dmgm$ where we assume that $[B]=[A]+[C]$ only if $A,B,C\in \obj\dmgm$ and $B\cong A\bigoplus C$.
}
Hence it remains to verify that any motif of the form $\mg(U)\lan j\ra$ (for a smooth $k$-variety $U$ and  $j\in \z$) belongs to the subcategory $\dmsp$ of $\dmgm$ strongly generated by $S$. 

Next, we can certainly assume $j=0$ and prove the latter statement by induction on the dimension of $U$. We also recall that for any 
 open cover $W=V\cup Y$ of a smooth $k$-variety $W$ we have (essentially, by the definition of $\dmgm$) the Mayer-Vietoris distinguished triangle $$\mg(U\cap V)\to \mg(V)\bigoplus \mg(Y)\to \mg(W)\to \mg(U\cap V)[1].$$ Hence we can assume that $U$ is connected and affine. Choosing the corresponding generic hyperplane section $T$ of $U$ (recall that $k$ is infinite) we deduce from the inductive assumption that both $\mg(T)$ and $\mg(T\to U)$ are objects of $\dmsp$; hence  $\mg(U)$ also is.

\end{proof}

\begin{rema}\label{rzeta}
1. In particular, $\zeta(M)$ is a rational series if $M=\mg(P)$, where $P$ is a smooth projective $k$-variety. Now, in this case it is quite "natural" to call $\zeta(M)$ the motivic zeta function of $P$; note also that this zeta function can be computed without using the isomorphism $K_0(\dmgm)\cong K_0(\chow)$ (i.e., using part 2 of Definition \ref{dkz}).  However, 
we prefer to define motivic zeta functions of general varieties via motives with compact support; see 
 \S\ref{sgs} below.

2. Applying the well-known motivic Gysin distinguished triangle one easily obtains that $\dms$ coincides with  the triangulated subcategory of $\dmgm$ strongly generated by $\cup_{j\in \z}\mg(\spv)\lan j \ra$
	 whenever $k$ admits resolution of singularities (in particular, if $p=0$). 
	Still it is not known whether this statement is fulfilled if $p>0$. Moreover, below we will describe a subcategory of $\dmgm$ that is a priori bigger than $\dms$ and satisfies the rationality of zeta functions property; this subcategory contains all motives with compact support of varieties.
	
	
	3. We would certainly like to prove that zeta functions are rational for arbitrary Chow (and thus also for Voevodsky) 
	 motives (even though 
	 Proposition \ref{pzetavar}(II.2) below gives the rationality of zeta functions of all "reasonable" motives). 
	 The author does not know how to prove that 
	$\zeta(N)$ is rational whenever $\zeta(M)$ is and $N$ is retract of $M$. The corresponding rationality implication is obviously wrong for a general triangulated category (since any $N$ is a 
	 retract of $N\bigoplus N[1]$; cf. Definition \ref{dkz}(1)), and it could be wrong for "additive versions" of zeta functions (cf. part 2 of the definition). 
	
	Possibly, one can deduce the rationality statement in question from the Krull-Schmidt theorem for Chow motives; moreover, it obviously follows from the aforementioned Conjecture 7.1 of \cite{kim} (see Remark \ref{rnori}(3)).
	


\end{rema}

\section{Applications to numerical motives}\label{skim}

\begin{theo}\label{tnum}
If Assumption \ref{assmain} is fulfilled then all numerical motives (over $k$) are Kimura finite dimensional.

\end{theo}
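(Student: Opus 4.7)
My plan is a joint induction on dimension which reduces, via the motivic weight complex functor and the semisimplicity of $\motn$, to the Kimura-finiteness of the ``affine hyperplane cones'' provided by Theorem~\ref{tfdim}(2).

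Since $\motn$ is the pseudo-abelian envelope of the numerical quotient of $\chow$, every numerical motive is a direct summand of a finite direct sum $\bigoplus_i\mg(P_i)\lan j_i\ra$ with the $P_i$ smooth projective. In the semisimple abelian rigid tensor category $\motn$, Kimura-finiteness is preserved by finite direct sums, by tensor product with invertible objects like $\q\lan 1\ra$, and by direct summands (since $\wedge^r(N\oplus N')=\bigoplus_{p+q=r}\wedge^pN\otimes\wedge^qN'$ shows that a summand of a Kimura-even object is Kimura-even, and the semisimple decomposition into simples lets one refine any summand of $M=M_+\oplus M_-$ into a Kimura-even plus a Kimura-odd part). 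It therefore suffices to show that $\mg(P)\in\motn$ is Kimura-finite for every smooth projective $P$.

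The main tool is the exact weight complex functor $t\colon\dmgm\to K^b(\chow)$ of \cite{mymot} composed with the projection $\chow\to\motn$; denote the composite $\bar t\colon\dmgm\to K^b(\motn)=D^b(\motn)$ (the equality by semisimplicity of $\motn$). Granting that $\bar t$ is a tensor triangulated functor, Kimura-even (resp.\ -odd) objects of $\dmgm$ map to Kimura-even (resp.\ -odd) objects of $D^b(\motn)$; writing any $X\in D^b(\motn)$ as $\bigoplus_i H^i(X)[-i]$ and applying Koszul sign rules then shows that each cohomology $H^i(X)$ of a Kimura-finite object of $D^b(\motn)$ is Kimura-finite in $\motn$. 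I now run a joint induction on $n$ proving (i) for every smooth affine $A$ of dimension $n$ all cohomologies $W^i(A):=H^i(\bar t(\mg(A)))$ are Kimura-finite, and (ii) for every smooth projective $P$ of dimension $n$ the motive $\mg(P)\in\motn$ is Kimura-finite. The base $n=0$ is trivial. For the inductive step of (i), Theorem~\ref{tfdim}(2) gives a generic smooth hyperplane section $Z\subset A$ with $N=\co(\mg(Z\to A))\in\dmgm$ Kimura-finite; the long exact $\motn$-cohomology sequence of $\bar t$ applied to $\mg(Z)\to\mg(A)\to N\to\mg(Z)[1]$ splits by semisimplicity and presents each $W^i(A)$ as a direct sum of a summand of $W^i(Z)$ (Kimura-finite by the inductive hypothesis on affine varieties) plus a summand of $H^i(\bar t(N))$ (Kimura-finite by what precedes). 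For the inductive step of (ii), I pick a smooth ample hyperplane section $H\subset P$, set $A=P\setminus H$ (smooth affine of dimension $n$), and apply $\bar t$ to the Gysin triangle $\mg(A)\to\mg(P)\to\mg(H)\lan 1\ra\to\mg(A)[1]$; since $\mg(P)$ and $\mg(H)\lan 1\ra$ already lie in $\chow$ they occupy cohomological degree zero after $\bar t$, and the same split-sequence argument exhibits $\mg(P)\in\motn$ as a direct sum of a summand of $W^0(A)$ (Kimura-finite by (i) in dimension $n$) and a summand of $\mg(H)\lan 1\ra$ (Kimura-finite by (ii) in dimension $n-1$).

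The main technical obstacle I anticipate is rigorously justifying the tensor compatibility of $\bar t$: the weight complex of \cite{mymot} is constructed only up to chain homotopy, so tensor compatibility of $t$ landing in $K^b(\chow)$ requires some care, but after the projection to $D^b(\motn)$ any homotopy ambiguity collapses into honest isomorphisms and the symmetric-group actions on tensor powers become well-defined. The secondary point, transferring Kimura parities from a complex to its cohomology via Koszul signs, is a standard super-linear algebra computation.
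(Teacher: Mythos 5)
Your proposal follows the same spine as the paper's proof — compose the motivic weight complex functor with the projection $\chow\to\motn$ to land in $K^b(\motn)$, use the semisimplicity of $\motn$ (Jannsen) to split all triangles, and feed in the Kimura-finite cones $\co(\mg(Z\to A))\lan i\ra$ from Theorem~\ref{tfdim}(2) — but the execution differs in a couple of places worth comparing. The paper does not re-run a dimension induction here: it reuses the fact, already established in the proof of Theorem~\ref{tzeta}(2), that $\dms$ is strongly generated by those cones, applies $\tnum$, and observes that since triangles in $K^b(\motn)$ split, Kimura-finiteness of the generators propagates through shifts and cones directly at the complex level; one then only has to note at the end that $\mg(P)$, sitting in degree $0$, is Kimura-finite in $\motn$ itself. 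Your route of extracting each cohomology $H^i(\bar t(-))$ and tracking Koszul signs, together with the Gysin-triangle induction $\mg(A)\to\mg(P)\to\mg(H)\lan 1\ra\to$ for the projective step, works and is more self-contained, but is also more elaborate than necessary once the generation statement is available. On your flagged ``main technical obstacle'': the paper sidesteps it by citing Lemma 20 of \cite{bachinv}, which produces the weight complex as an honest exact \emph{tensor} triangulated functor $\dmgm\to K^b(\chow)$ (not merely a functor defined up to chain homotopy), so tensor compatibility of $\tnum$ is immediate; your assertion that the homotopy ambiguity ``collapses after projecting to $D^b(\motn)$'' would still need a genuine argument, whereas the citation removes the issue entirely. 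One more small point: your in-line sketch of why a retract of a Kimura-finite object is again Kimura-finite (refining a summand of $M_+\oplus M_-$ via the decomposition into simples) is not quite complete — it does not by itself rule out a simple factor occurring in both the even and odd parts — and the paper simply invokes Lemma 3.7(3) of \cite{andsurvey} for this standard fact, which you should do as well.
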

\begin{proof}
We will use the notation $\motn$ for the category of numerical motives.
It certainly suffices to verify that all numerical motives are Kimura-finite as objects of the category $K^b(\motn)\supset \motn$.

We recall the existence of an exact tensor {\it weight complex} functor $\dmgm\to K^b(\chow)$ whose composition with the aforementioned 
 embedding $\chow\to \dmgm$ is the obvious embedding $\chow\to K^b(\chow)$; see Lemma 20 of \cite{bachinv} along with the text after it (cf. also Proposition 2.3.2(1) of \cite{bzp} and \S6.3 of \cite{bws}). Next we compose this functor with the obvious functor $K^b(\chow)\to K^b(\motn)$; 
 we will write $\tnum$ for this composition.  Certainly, $\tnum$ is an exact tensor functor as well; hence it sends finite-dimensional objects of $\dmgm$ into that of $K^b(\motn)$.

Next, we have demonstrated in the proof of Theorem \ref{tzeta}(2) that the category $\dms$ is strongly generated by finite dimensional objects of the form $\co(\mg(Z\to A))\lan i\ra$ (see Theorem \ref{tfdim}(2)). Thus for any smooth projective $P$ its numerical motive (considered as an object of $K^b(\motn)$) can be obtained from the finite dimensional motives of the form $\tnum(\co(\mg(Z\to A))\lan i\ra)$ by means of shifts and taking cones of morphisms.  

Now, $\motn$ is an abelian semisimple category according to Theorem 1 of \cite{ja}. 
 Thus if $A\to B\to C\to A[1]$ is a distinguished triangle in $K^b(\motn)$ then $B$ is a direct summand of $A\bigoplus C$ 
(look at the long exact sequence for the $\motn$-homology of these complexes). Since direct sums and retracts of Kimura-finite objects  are Kimura-finite by Lemma 3.7(3) of \cite{andsurvey}, and finite dimensionality is also respected by shifts (cf. Theorem 1 of \cite{gul}),  we obtain that numerical motives of smooth projective varieties are Kimura-finite as well. Hence arbitrary numerical motives are finite dimensional also (apply 
Lemma 3.7(3) of \cite{andsurvey} once again).

\end{proof}

\begin{coro}
The category $\motn$ is Tannakian (if we modify the commutativity constraint for the tensor product on it).

\end{coro}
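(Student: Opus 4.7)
The plan is to combine Theorem \ref{tnum} above with the standard characterization of Tannakian categories due to Deligne, in the form used for Chow motives by Kimura, Andr\'e and others. First, recall that by Jannsen's theorem (Theorem 1 of \cite{ja}), $\motn$ is a $\q$-linear semisimple abelian rigid symmetric tensor category with $\operatorname{End}(\q) = \q$. By Theorem \ref{tnum}, every object $M \in \obj \motn$ is Kimura finite dimensional, hence admits a decomposition $M = M_+ \oplus M_-$ with $M_+$ Kimura-even and $M_-$ Kimura-odd.

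Next I would verify that this decomposition is canonical and functorial. The point is that a non-zero Kimura-even object and a non-zero Kimura-odd object cannot be isomorphic, and more generally $\operatorname{Hom}(M_+, N_-) = \operatorname{Hom}(M_-, N_+) = 0$ for any Kimura-even $M_+, N_+$ and Kimura-odd $M_-, N_-$; this is the content of (the relevant parts of) Proposition 6.3 and Corollary 6.4 of \cite{kim} (see also \S3 of \cite{andsurvey}). Combined with the semisimplicity of $\motn$, these vanishings yield a canonical $\z/2$-grading of $\motn$ in which the even part is the full subcategory of Kimura-even objects and the odd part is that of Kimura-odd objects.

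I would then modify the commutativity constraint for the tensor product on $\motn$ by the Koszul sign rule with respect to this $\z/2$-grading: on homogeneous components of parities $i,j \in \z/2$, multiply the original braiding by $(-1)^{ij}$. Under the resulting symmetric monoidal structure, the categorical dimension (i.e.\ the trace of the identity) of a Kimura-even object of Kimura-dimension $n$ is $n$, and of a Kimura-odd object of Kimura-dimension $n$ is also $n$ rather than $-n$; thus every object of the modified $\motn$ has a non-negative integer categorical dimension.

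Finally, I would invoke Deligne's theorem: a $\q$-linear abelian rigid symmetric tensor category with $\operatorname{End}(\mathbf{1}) = \q$ in which every object has non-negative integer categorical dimension is Tannakian, i.e.\ equivalent via a fibre functor to the category of finite-dimensional representations of an affine pro-algebraic group scheme over $\q$ (cf.\ Theorem 9.2.1 and the surrounding discussion in \cite{andsurvey}). The only real subtlety is the functoriality of the Kimura decomposition, which is already settled by \cite{kim}; everything after that is a direct application of known results.
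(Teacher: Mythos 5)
Your argument is correct, and it is essentially the same route as the paper's, which simply cites Theorem~9.2.2 of \cite{aka} (Andr\'e--Kahn); that theorem encapsulates exactly the chain you spell out, namely the canonical $\z/2$-grading coming from the even/odd Kimura decomposition in a semisimple rigid tensor category, the Koszul-sign modification of the braiding, and the resulting non-negative integer dimensions feeding into Deligne's internal characterization of Tannakian categories. One small bibliographic slip: the relevant packaged statement is in \cite{aka}, not \cite{andsurvey}, though the latter does survey the same circle of ideas.
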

\begin{proof}

The claim follows from Theorem \ref{tnum} immediately according to Theorem 9.2.2 of \cite{aka}. 

\end{proof}


\section{Supplements: on zeta functions of varieties and conservativity of realizations}\label{suppl}

In \S\ref{sgs} we define motivic zeta functions of arbitrary varieties as the ones of their motives with compact support, and establish their rationality.

In \S\ref{sreal} we recall that in the case $p=0$ the conservativity of \'etale realizations is equivalent to that of the De Rham one; this relates our Assumption \S\ref{assmain} to the results and announcements of \cite{ayoubcon}. 

\subsection{On motivic zeta functions of varieties}\label{sgs}

We recall some properties of motives with compact support and study the corresponding zeta functions. Note that in the case $p=0$ part II.2 of the following proposition can be easily deduced from Theorem \ref{tzeta}(2) directly (see Remark \ref{rzeta}(2)).

\begin{pr}\label{pzetavar}
I. Motives with compact support $\mgc(-)$ of  $k$-varieties (as provided by \S4.1 of \cite{1} along with \S5.3 of \cite{kellyast}) enjoy the following properties.

\begin{enumerate}
\item\label{imceq}
We have $\mgc(X)=\mg(X)$  whenever $X\in \spv$. 
Moreover, $\mgc(X)$ 
 is an object $\dmgm$ for any $X\in \var$.

\item\label{imctr} If $Z$ is a closed subvariety of a $k$-variety $X$ 
 and $U=X\setminus Z$ then there exists a distinguished triangle 
$\mgc(Z) 
 {\to} \mgc(X)\to \mgc(U)\to \mgc(Z)[1].$ 

\item\label{imcd} If $X$ is a smooth $k$-variety and (all its components) are of dimension $d\ge 0$ then $
{\mg(X)}^{\widehat{\ \ \ }}\cong \mgc(X)\lan -d\ra$, where $
{\mg(X)}^{\widehat{\ \ \ }}$ denotes the dual to ${\mg(X)}$ in the (tensor) category $\dmgm$.

\end{enumerate}

II. For $X\in \var$ define $\zeta(X)\in K_0(\chow)[[T]]$ as $\zeta(\mgc(X))$. Then the following statements are valid.

1. For $Z,X,$ and $U$ as in assertion I.\ref{imctr} we have $\zeta(X)=\zeta(Z)\cdot \zeta(U)$.

2. Let $\dmsc$ denote the smallest strictly full tensor triangulated subcategory of $\dmgm$ that contains  both $\dms$ (see Theorem \ref{tzeta}(2)) and $\mgc(X)$ for any $X\in \var$; assume that  Assumption \ref{assmain} is fulfilled (for our $k$). Then for any $M\in \obj \dmsc$ the zeta function  $\zeta(M)$ is a rational series. In particular, all zeta functions of varieties are rational. 

\end{pr}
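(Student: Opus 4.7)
The three statements in Part I are well-known properties of Voevodsky motives with compact support; I would simply cite \cite[\S4.1]{1} (together with \cite[\S5.3]{kellyast} for the $\q$-linear setting and for the extension of $\mgc$ from smooth schemes to arbitrary varieties). Part II.1 is immediate: the localization distinguished triangle of I.\ref{imctr}, combined with the multiplicativity of $\zeta$ on distinguished triangles (Theorem \ref{tzeta}(1)), yields $\zeta(X)=\zeta(Z)\cdot \zeta(U)$.

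For Part II.2, the plan is as follows. Let $\cu$ denote the strictly full triangulated subcategory of $\dmgm$ strongly generated by the class of Kimura-finite objects. Since Kimura-finite objects have rational zeta function (Proposition 4.6 of \cite{andsurvey}) and rationality is preserved by distinguished triangles (Theorem \ref{tzeta}(1)), every object of $\cu$ has rational zeta function; hence it suffices to prove $\dmsc\subset\cu$. For this I would record three stability properties of $\cu$, all relying on Lemma 3.7(3) of \cite{andsurvey} and on the rigidity of $\dmgm$: first, $\cu$ is closed under tensor products (reduced, via exactness of $-\otimes N$, to the fact that tensor products of Kimura-finite objects are Kimura-finite); second, $\cu$ is closed under the duality functor $(-)^{\vee}$, because dualization reverses distinguished triangles and sends Kimura-finite objects to Kimura-finite ones; and third, $\cu$ is closed under Tate twists $\lan i\ra$, a special case of the first property since $\q\lan 1\ra$ is Kimura-even.

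Next I would observe that $\mg(U)\in\cu$ for every smooth $k$-variety $U$, which is exactly what was established in the proof of Theorem \ref{tzeta}(2); consequently $\dms\subset\cu$. For arbitrary $X\in\var$, I would show $\mgc(X)\in\cu$ by Noetherian induction on $\dim X$: choose a dense open smooth subvariety $U\subset X$ with complement $Z=X\setminus U$ of strictly smaller dimension, and apply the localization triangle of I.\ref{imctr} to present $\mgc(X)$ as an extension of $\mgc(U)$ by $\mgc(Z)$. By induction $\mgc(Z)\in\cu$, while the Poincar\'e-type duality of I.\ref{imcd} gives $\mgc(U)\cong \mg(U)^{\vee}\lan d\ra$, which belongs to $\cu$ by the three stability properties just recorded. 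Hence $\mgc(X)\in\cu$, and since $\cu$ also contains $\dms$ and is closed under the tensor-triangulated operations, we conclude $\dmsc\subset\cu$. The final sentence of the proposition then follows by taking $M=\mgc(X)$.

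The main obstacle I anticipate is the careful verification of the tensor-product closure property of $\cu$: one must argue that if $M$ lies in the iterated cone-closure of Kimura-finite objects and $N\in\cu$ as well, then $M\otimes N\in\cu$. The argument reduces, by exactness of $-\otimes K$ and induction on the number of cones involved, to the fact that tensor products of Kimura-finite objects are again Kimura-finite; but the bookkeeping with iterated distinguished triangles has to be spelled out cleanly.
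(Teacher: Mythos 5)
Your proof is correct and reaches the conclusion via the same key lemmas as the paper, but packages the argument a bit differently. The paper's proof explicitly identifies a generating set for $\dmsc$: combining I.\ref{imctr}, I.\ref{imcd}, and the hyperplane-section argument from Theorem \ref{tzeta}(2), it shows that $\dmsc$ is, as a strict tensor triangulated subcategory, generated by the $\co(\mg(Z\to A))\lan j\ra$ and their duals, and then notes that $\dmsc$ is therefore strongly generated, as a plain triangulated subcategory, by finite tensor products of these objects; since each such product is Kimura-finite by Lemma 3.7(3) of \cite{andsurvey}, rationality follows from Theorem \ref{tzeta}(1) together with Proposition 4.6 of ibid. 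Your version abstracts this into the subcategory $\cu$ strongly generated by all Kimura-finite objects, and proves $\dmsc\subset\cu$ by establishing closure of $\cu$ under $\otimes$, $(-)^{\vee}$, and $\lan i\ra$ and by an explicit Noetherian induction for $\mgc(X)$. These two routes are morally identical (the paper's ``thus $\dmsc$ also equals the triangulated subcategory \dots strongly generated by tensor products \dots'' is exactly your tensor-closure bookkeeping, left implicit), but yours makes the steps visible — in particular the $\cu_N=\{M:M\otimes N\in\cu\}$ device and the induction for singular $X$ — at the cost of not producing a concrete list of generators as the paper does. One small point you should make explicit in a write-up: assertion I.\ref{imcd} is stated for equidimensional smooth $X$, so in your Noetherian step, when $X$ is reducible with components of mixed dimensions, first split the dense open smooth $U$ into its equidimensional pieces (so $\mgc(U)\cong\bigoplus_i\mg(U_i)^{\vee}\lan d_i\ra$) before invoking duality.
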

\begin{proof}
I. These statements easily follow from the results of \cite{kellyast}; see  Theorem 5.3.18 of ibid. and Proposition 4.1.1(1,3) of \cite{bscwh}.

II.1. This is immediate from the join of assertion I.\ref{imctr} with Theorem \ref{tzeta}(1).  

2. Applying assertion I.\ref{imctr} we obtain that $\dmsc$ is the smallest  strict tensor triangulated subcategory of $\dmgm$ that contains both $\dms$ and $\mgc(\sv)$. 
 Combining the duality statement provided by assertion I.\ref{imcd}  with the argument used in the proof of Theorem \ref{tzeta}(2) we obtain that $\dmsc$ is generated in this sense by motives of the form $\co(\mg(Z\to A))\lan j \ra$ and 
 ${\co(\mg(Z\to A))}^{\widehat{\ \ \ }}$ for $Z\to A$ as in Theorem \ref{tfdim}(2). Thus  $\dmsc$ also equals the 
 triangulated subcategory of $\dmgm$ 
 strongly generated by tensor products of the form $$\otimes_{1\le i\le a} \co(\mg(Z_i\to A_i)) \otimes (\otimes_{1\le m\le b} (\co(\mg(Z_m\to A_m))))^{\widehat{\ \ \ }}\lan j \ra$$ for $Z_i\to A_i$ and $Z_m\to A_m$ of this sort and $j\in \z$. Now, tensor products of this form are finite-dimensional since all their multipliers are (see  Theorem \ref{tfdim}(2) and Lemma 3.7(3) of \cite{andsurvey}); thus it remains to apply Proposition 4.6 of ibid. (cf. the proof of Theorem \ref{tzeta}(2)). 

\end{proof}

\begin{rema}\label{rzetavar}
1. There are several reasons to call $\zeta(X)$ (see part II.1 of our proposition) the motivic zeta function of a variety $X$. Firstly, the relation given by our assertion II.1 is what one awaits from zeta functions of varieties. 
 
Secondly, it is well-known that  the number of points of a variety $X$ over a finite field (of characteristic $p$) is determined by the $\ql$-adic \'etale  cohomology of $X$ with compact support, whereas the latter is 
 functorial in $\mgc(X)$ (see Proposition 4.2.5(1) of \cite{bscwh}); this relates $\zeta(X)$ with Hasse-Weil zeta functions of varieties.

Thirdly, for any 
$M\in \obj \dmgm$ 
 its class in $K_0(\chow)$ can be computed as the class of $t(M)$ in $K_0(K^b(\chow))\cong K_0(\chow)$ (where $K^b(\chow)$ is defined similarly to Definition \ref{dkz}(1) and $t:\dmgm\to K^b(\chow)$ is the exact weight complex functor mentioned in the proof of Theorem \ref{tnum}); see the proof of \cite[Theorem 5.3.1]{bws} or of \cite[Theorem 6.4.2]{mymot}. Next, if $p=0$ then for $M=\mgc(X)$ (where $X$ is a $k$-variety) the object $t(M)$ is isomorphic to the  weight complex of $X$ provided by Theorem 2 of \cite{gs} (see Proposition 6.6.2 of \cite{mymot}).\footnote{Recall that the term "weight complex"  was originally introduced in \cite{gs}; yet the weight complex theory developed in \S3 and \S6 of \cite{bws} is 
much more general and powerful.} Hence $\zeta(X)$ coincides with the series $\zeta_{M(X)}$ 
 defined in \S2 of \cite{gul}. 

2. In the case $p=0$ the category $\dmsc$ is easily seen to coincide with the category $\dms$ described in Theorem \ref{tzeta}(2) (cf. Remark \ref{rzeta}(1)). Since this case is the main one for the current paper, the "extension" of $\dms$ given by part II.2 of our proposition does not appear to be really important. 
\end{rema}

\subsection{On the conservativity of various realizations}\label{sreal}

Let us discuss the conservativity of various realization.

\begin{pr}\label{pcons}
Assume that $p=\cha k=0$. Then Assumption \ref{assmain} is fulfilled if and only if the naturally defined De Rham realization $RH_{dR}:\dmgmop\to D^b(k-\vecto)$ (cf. \cite{ayoubcon}) is conservative on $\dmgm$.

\end{pr}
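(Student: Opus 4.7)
The strategy is to reduce both conservativity assertions to the conservativity of a common Betti realization over $\com$, using two key ingredients: (i) the conservativity of the base-change functor on $\dmgm$ along field extensions (for $\q$-linear motives in characteristic $0$), and (ii) the classical comparison isomorphisms relating algebraic De Rham, $\ql$-adic étale, and Betti cohomology over $\com$.

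First, I would reduce to the case of an algebraically closed base field. Both $RH_{dR}$ and $\rhetl$ factor compatibly through the base-change functor $\dmgm(k)\to\dmgm(\kalg)$: for $RH_{dR}$ one has $RH_{dR}(M_{\kalg})\cong RH_{dR}(M)\otimes_{k}\kalg$, whose vanishing is equivalent to that of $RH_{dR}(M)$ by faithful flatness, while for $\rhetl$ the factorization through base change is essentially built into its definition from Remark \ref{rkim}(2). The additional input needed is that the base-change functor $\dmgm(k)\to\dmgm(\kalg)$ is itself conservative for $\q$-linear motives; this is a standard Galois descent statement that uses the fact that $\q$-coefficients kill the pro-finite obstructions.

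Next, for $k$ algebraically closed of characteristic $0$, I would invoke the Lefschetz principle: any $M\in\obj\dmgm(k)$ is defined over a finitely generated subfield $k_0\subset k$, which admits an embedding into $\com$. Both realizations commute with the extension of scalars along such an embedding (the De Rham one by tensoring up the underlying $k_0$-complex; the étale one by the standard invariance of $\ql$-adic étale cohomology under extensions of algebraically closed fields of the same characteristic, and the conservativity of base change $\dmgm(k_0)\to \dmgm(\com)$ already invoked above). This reduces the equivalence to the single case $k=\com$.

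Finally, over $\com$ the classical comparison isomorphisms yield natural quasi-isomorphisms $RH_{dR}(M)\cong RH_B(M)\otimes_\q \com$ and $\rhetl(M)\cong RH_B(M)\otimes_\q \ql$, where $RH_B$ denotes the Betti realization; since $\com$ and $\ql$ are faithfully flat over $\q$, the three vanishing conditions on $RH_{dR}(M)$, $RH_B(M)$, and $\rhetl(M)$ are pairwise equivalent, which yields the desired equivalence. The main technical obstacle is to justify rigorously the conservativity and base-change compatibility statements for $\dmgm$, and to verify that the realization functors constructed in \cite{ayoubcon} and \cite{ivorretale} are compatible with the comparison isomorphisms on all of $\dmgm$ rather than merely on Chow motives; all of this is well-documented in the literature for characteristic $0$, but the bookkeeping is the substantive part of the argument.
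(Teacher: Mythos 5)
Your strategy is essentially the paper's: reduce to a finitely generated field of definition $k_0$ embeddable into $\com$, and compare realizations over $\com$ via the classical comparison isomorphisms (which the paper packages in a citation to Huber's mixed realization, Theorem 2.3.3 of \cite{hureal}, rather than spelling out the Betti intermediary). One remark worth making: your reduction to $\kalg$ and the accompanying appeals to conservativity of the base-change functor $\dmgm(k)\to\dmgm(\kalg)$ (and $\dmgm(k_0)\to\dmgm(\com)$) are superfluous. The proposition reduces to showing that, for each \emph{fixed} $M\in\obj\dmgm(k)$, one has $\rhetl(M)=0\Leftrightarrow RH_{dR}(M)=0$; once this equivalence of vanishing is established pointwise, both directions of the conservativity equivalence follow formally without ever needing base change to be conservative. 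Your own chains of implications already work this way if one simply deletes the "conservativity of base change" interludes: faithful flatness handles the descent of vanishing for $RH_{dR}$, the factorization through $\kalg$ is built into the definition of $\rhetl$, and the comparison isomorphism links the two. So the ingredient you flag as "the additional input needed" is in fact not needed, and the intermediate passage through $\kalg$ can be dropped entirely, passing directly from $k$ to $k_0\subset\com$ as the paper does.
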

\begin{proof}

Assume that $k$ is a subfield of the field $\com$ of complex numbers. Then the existence of the "mixed" realization of $k$-motives provided by Theorem 2.3.3 of \cite{hureal} gives the result immediately (see Definitions 2.2.1 and 2.2.2 of ibid.; cf. also \cite{hucorr}). 

The case of a general (characteristic $0$) field $k$ reduces to this one via a standard reasoning. For $M\in \obj \dmgm$ we should prove that 
 $\rhetl(M)=0$ if and only if  $RH_{dR}(M)=0$. Now, let $k_0\subset k$ be a finitely generated field of definition for $M$ (see \S2.3 of \cite{bscwh}), i.e., $M$ can be obtained from some motif $M_0\in \obj \dmgm(k_0)$ via base field change. Then we 
  have $RH_{dR}(M)\cong RH_{dR}(M_0)\otimes_{k_0}k$ since we have a similar canonical isomorphism of this sort for the cohomology of smooth varieties (here one can apply Theorem B.2.2 of \cite{hucorr}). 
A similar argument yields	
 that the \'etale realizations of $M_0$ and $M$ are canonically isomorphic; 
 thus we can assume that $k_0\subset \com$ indeed.
\end{proof}

\begin{rema}\label{rinfield}
1. 
The author was motivated to write this paper by 
 the recent proof of the conservativity of the restriction of $RH_{dR}$ to the category $\chow$ in the case $p=0$;  
 see Theorem I of \cite{ayoubcon} (unfortunately, the current version of this proof contains a gap). 
 Certainly, this statement implies the seminal Bloch conjecture (and this implication is mentioned in 
 ibid. as well). 
 It is also noted 
 that one can deduce the conservativity of $RH_{dR}$ on $\dmgm$ (for $p=0$) from Theorem II of ibid.; 
 cf. Theorem 1.5.1 and Remark 3.1.4(3) of \cite{bwcomp}. 

Thus the current paper can be thought of as of a certain "complement" to \cite{ayoubcon}. 

2. Probably the assumption that $k$ is infinite is not really important for the main statements of the current paper. Assumption \ref{assmain} for a field $k$ appears to be equivalent to that for its algebraic closure (since one can consider fields of definitions of $\kalg$-motives that are finite over $k$ and restrict scalars; cf. the proof of Proposition \ref{pcons}). Since the base field change functor $\motn(k)\to \motn(\kalg)$ is well-known to be faithful, this chain of arguments gives Theorem \ref{tnum} 
in the case of a finite $k$.

The author suspects that one can also extend Theorem \ref{tzeta} to the case of finite $k$ using somewhat similar arguments; yet he did not think much about this question since  currently the case $p>0$ is not really  actual. 


\end{rema}


\begin{thebibliography}{1}

\bibitem[And04]{andsurvey}  Andre Y., Motifs de dimension finie// S\'eminaire Bourbaki 2003 (2004), 115--145.


\bibitem[AnK02]{aka}  Andre Y., Kahn B., Nilpotence, radicaux et structures monoi"dales// 
 Rend. Sem. Mat. Univ. Padova 108 (2002), 107--291.


\bibitem[Ayo07]{ayoubvan} Ayoub J., The motivic vanishing cycles and the conservation conjecture. London Mathematical Society Lecture Note Series, 1(343), 2007, 3--54.

\bibitem[Ayo18]{ayoubcon} Ayoub J., Topologie feuillet\'ee et la conservativit\'e des r\'ealisations classiques en caract\'eristique nulle, preprint, 2018, \url{http://user.math.uzh.ch/ayoub/PDF-Files/Feui-ConCon.pdf}

\bibitem[Bac17]{bachinv} Bachmann T.,  On the invertibility of motives of affine quadrics// Doc. Math. 22 (2017), 363--395.


\bibitem[Bei87]{be87} Beilinson A., On the derived category of perverse sheaves, in:
 K-theory, Arithmetic and Geometry, Lecture Notes in Mathematics 1289, Springer,
 1987, 27--41.

\bibitem[Bon09]{mymot} Bondarko M.V.,
{Differential graded motives:
 weight complex, weight filtrations and spectral sequences for
 realizations; Voevodsky vs. Hanamura}// J. of the Inst.
 of Math. of Jussieu,
 v.8(1), 2009, 39--97,
see also
\url{http://arxiv.org/abs/math.AG/0601713}


\bibitem[Bon10]{bws} Bondarko M.V., Weight structures vs.  $t$-structures; weight filtrations,  spectral sequences, and complexes (for motives and in general)//  J. of K-theory, v. 6(3), 2010, 387--504, see also \url{http://arxiv.org/abs/0704.4003}

\bibitem[Bon11]{bzp} Bondarko M.V., $\mathbb{Z}[\frac{1}{p}]$-motivic resolution of singularities//  Compositio Math., vol. 147(5), 2011, 1434--1446.

\bibitem[Bon18]{bwcomp} Bondarko M.V., On weight complexes,  pure functors, and detecting  weights, preprint, 2018, to appear in the arxiv.

\bibitem[BoK18]{bokum}  Bondarko M.V., Kumallagov D.Z., On Chow weight structures without projectivity and resolution of singularities,   Algebra i Analiz 30(5), 2018, 57--83; see also \url{https://arxiv.org/abs/1711.08454} 


\bibitem[BoS14]{bscwh}  Bondarko M.V., Sosnilo V.A., Detecting effectivity of motives, their weights, connectivity, and dimension via Chow-weight (co)homology: a "mixed motivic decomposition of the diagonal", preprint, 2014, \url{http://arxiv.org/abs/1411.6354}








\bibitem[GiS96]{gs} Gillet H., Soul\'e C., Descent, motives and $K$-theory// J.  reine und angew. Math. 478, 1996, 127--176.





\bibitem[Gul10]{gul} Guletskii V., Zeta functions in triangulated categories (Russian)//   Mat. Zametki 87(3), 2010,  369-381, transl. in Math. Notes 87(3--4), 2010, no. 3-4,  345--354. 



\bibitem[Hub00]{hureal} Huber A., Realizations of  Voevodsky's motives// J. Alg. Geometry,  9.4 (2000),   755--799.

\bibitem[Hub04]{hucorr} Huber A., Corrigendum to "Realization of Voevodsky's motives"// J. Alg. Geometry 13.1 (2004), 195--207.



\bibitem[HM-SFVW17]{hubnori} Huber A., M\"uller-Stach S., Friedrich B.,  Von Wangenheim J., Periods and Nori motives, Ergebnisse der Mathematik und ihrer Grenzgebiete. 3. Folge, vol. 65, Springer, 2017. 


\bibitem[Ivo07]{ivorretale} Ivorra F., Realisation l-adique des motifs triangules geometriques  I// Doc. Math. 12 (2007), 607--671.





\bibitem[Jan92]{ja} Jannsen U., Motives, numerical equivalence, and semi-simplicity //Inv. Math., vol. 107 (1992), 447--452.

\bibitem[Kat93]{katzaff} Katz N.M., Affine Cohomological Transforms, Perversity, and Monodromy// Journal of the AMS, vol. 6(1), 1993, 149--222.


\bibitem[Kel12]{kellyth} Kelly S., Triangulated categories of motives in positive characteristic, PhD thesis of Universit\'e
Paris 13 and of the Australian National University,  
2012, \url{http://arxiv.org/abs/1305.5349}


\bibitem[Kel17]{kellyast} Kelly S., Voevodsky motives and ldh-descent// Asterisque No. 391 (2017).




\bibitem[Kim05]{kim} Kimura S.-I., Chow motives are finite-dimensional, in some sense// Math. Ann. 331 (2005), 173--201.




\bibitem[Maz04]{mazza} Mazza C., Schur functors and motives// K-Theory 33 (2004), 89--106.

\bibitem[Mur90]{murre} Murre J.P., On the motive of an algebraic surface// J. reine angew. Math., 409 (1990), 190--204.




  \bibitem[Voe00]{1} Voevodsky V. Triangulated category of motives, in:
 Voevodsky V.,  Suslin A., and  Friedlander E.,
Cycles, transfers and motivic homology theories, Annals of
 Mathematical studies, vol. 143, Princeton University Press,
 2000,  188--238. 





\end{thebibliography}
\end{document}